\documentclass[11pt]{amsart}

\usepackage{amsmath,amssymb,amsthm,mathtools}
\usepackage{enumitem}
\usepackage[colorlinks=true,linkcolor=blue,citecolor=blue,urlcolor=blue]{hyperref}

\allowdisplaybreaks
\numberwithin{equation}{section}

\newcommand{\ddc}{\sqrt{-1}\partial\bar\partial}
\newcommand{\Ric}{\operatorname{Ric}}
\newcommand{\BK}{\operatorname{BK}}

\newcommand{\vep}{\varepsilon}

\newcommand{\e}{\mathrm e}

\renewcommand{\d}{\partial}

\newcommand{\ddbar}{\sqrt{-1}\d\overline{\d}}

\theoremstyle{plain}
\newtheorem{theorem}{Theorem}[section]
\newtheorem{proposition}[theorem]{Proposition}
\newtheorem{lemma}[theorem]{Lemma}

\theoremstyle{remark}

\title[The top Yau--Yang conjecture]{The top Yau--Yang conjecture for K\"ahler manifolds with positive sectional curvature}

\author[V. Datar]{Ved V. Datar}
\address{Department of Mathematics, Indian Institute of Science, Bengaluru 560012, India}
\email{vvdatar@iisc.ac.in}

\author[V. P. Pingali]{Vamsi Pritham Pingali}
\address{Department of Mathematics, Indian Institute of Science, Bengaluru 560012, India}
\email{vamsipingali@iisc.ac.in}

\author[H. Seshadri]{Harish Seshadri}
\address{Department of Mathematics, Indian Institute of Science, Bengaluru 560012, India}
\email{harish@iisc.ac.in}

\date{\today}

\subjclass[2020]{Primary 53C55; Secondary 32Q15, 32U40, 32W20}
\keywords{Cohn--Vossen inequality, positive sectional curvature, Ricci form, Bedford-Taylor theory}

\begin{document}

\begin{abstract}
We prove that the top wedge power of the Ricci form of a complete non-compact K\"ahler manifold with positive sectional curvature has finite integral. Using a result of Chen-Zhu, an immediate consequence is the quasiprojectivity of such manifolds under the assumption of bounded sectional curvature. A key new idea to prove B\'ezout estimates along with a Lipschitz weight with finite Monge-Amp\`ere mass is used in the proof of the main result.
\end{abstract}

\maketitle

\section{Introduction}

If $(M,g)$ is a complete Riemannian $2$-manifold with Gaussian curvature $K > 0$, the classical Cohn--Vossen theorem \cite{CV} implies that $\int_M K dV < \infty$. In higher dimensions, a Cohn--Vossen type problem, posed by Yau \cite{Yau} and subsequently formulated in the K\"ahler setting by Yang \cite{Yang}, asks if the following statement is true: Let $(X^n,\omega)$ be a complete K\"ahler manifold with nonnegative holomorphic bisectional curvature. If $\Ric_{\omega}$ denotes the Ricci form and $r$ denotes the distance from a fixed point $o \in X$, then the quantities
\[
        r^{2k-2n}\int_{B(o,r)}\Ric_{\omega}^k\wedge\omega^{n-k},
        \qquad 1\le k\le n,
\]
are bounded independently of $r$. When the sectional curvatures are nonnegative, the case $k=1$ follows from a more general result of Petrunin \cite{Pet}. The case $k=n$, equivalent to global finiteness of the top Ricci mass
\[
        \int_X \Ric_{\omega}^n,
\]
is of particular interest, as it plays a role in Yau's uniformisation conjecture \cite{CZ2, DPS, DPS2} and in compactification results for non-compact K\"ahler manifolds \cite{Mok90}.   Liu \cite{GL} proved the finiteness of $\int_X \Ric_{\omega}^n$ when the volume growth is maximal.

In \cite{DPS}, we proved the surface case of this finiteness statement under positive sectional curvature (without any volume growth hypotheses). The key result on which our proof depended is actually valid in all dimensions: if $(X^n,\omega)$, $n \ge 1$, has positive holomorphic bisectional curvature and $X$ admits a smooth strictly plurisubharmonic exhaustion function with bounded gradient, then there exists a uniformly Lipschitz plurisubharmonic function $\varphi$ such that
\begin{gather}\label{ineq:finiteMAmassweight}
    0<\displaystyle \int_X (\ddc\varphi)^n<\infty .
\end{gather}

The hypotheses needed for this construction, together with the heat-flow and cut-off inputs are satisfied in the positive sectional curvature case by the results used in \cite{DPS,DPS2} and by Greene--Wu \cite{GW}. The purpose of this work is to show that the top-degree Yau--Yang estimate holds in every dimension:

\begin{theorem}\label{thm:main}
Let $(X^n,\omega)$ be a complete non-compact K\"ahler manifold with positive sectional curvature. Then
\[
        \int_X \Ric_{\omega}^n<\infty .
\]
More generally, the same conclusion holds if $(X, \omega)$ has positive holomorphic bisectional curvature and $X$ admits a smooth strictly plurisubharmonic exhaustion function with uniformly bounded gradient.
\end{theorem}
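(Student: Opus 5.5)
The plan is to compare $\Ric_\omega^n$ with the Monge--Amp\`ere measure of the Lipschitz plurisubharmonic weight $\varphi$ from \eqref{ineq:finiteMAmassweight}, using a Bézout-type estimate.

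\textbf{Step 1: reduction to the general hypothesis.} In the positive sectional curvature case, Greene--Wu \cite{GW} (together with the inputs of \cite{DPS,DPS2}) supply a smooth strictly plurisubharmonic exhaustion $\psi$ with $|\nabla\psi|\le C$. Hence it suffices to prove the second statement. Under that hypothesis we take the uniformly Lipschitz plurisubharmonic $\varphi$ with $0<\int_X(\ddc\varphi)^n<\infty$. Since $\varphi$ is built from $\psi$ and the heat flow, we may also assume, after adding a multiple of $\psi$ if needed, that $\varphi$ is an exhaustion with $\varphi\ge a r - b$ outside a compact set.

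\textbf{Step 2: comparison of $\Ric_\omega$ with $\ddc\varphi$ via a potential.} Write $\Ric_\omega=-\ddc\log\det g$ locally. Globally, use the heat-flow/Ni--Tam type solution $u$ of $\ddc u=\Ric_\omega$ (Poisson equation). Under positive bisectional curvature and the stated exhaustion, $u$ is plurisubharmonic with $u\le C\log(2+r)$ or at least $u=O(r)$. The key intermediate bound I would aim for is a Bézout estimate. For plurisubharmonic functions $u_1,\dots,u_n$ with $u_i\le A_i\varphi+B_i$ near infinity (and $\varphi$ an exhaustion of finite Monge--Amp\`ere mass),
\[
\int_X \ddc u_1\wedge\cdots\wedge\ddc u_n\le \Bigl(\prod A_i\Bigr)\int_X(\ddc\varphi)^n .
\]
I would prove this by Bedford--Taylor comparison. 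Replace $u_i$ by $\max(u_i, A_i\varphi + B_i')$. These functions agree with $A_i\varphi+B_i'$ outside a sublevel set $\{\varphi<t\}$. The mixed Monge--Amp\`ere masses over $\{\varphi<t\}$ of the modified functions then equal those of the $A_i\varphi$ by Stokes/Bedford--Taylor, since the currents coincide near the boundary. Letting $B_i'\to -\infty$, or letting $t\to\infty$ with monotone convergence, recovers $\int_{\{\varphi<t\}}(\ddc u)^n$ from below.

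\textbf{Step 3: growth control.} Apply Step 2 with $u_i=u$, after noting $\Ric_\omega\ge0$ so that $\Ric_\omega^n$ is a positive measure equal to $(\ddc u)^n$. This requires $u\le A\varphi+B$. Since $\varphi$ is Lipschitz and grows at least linearly, it suffices to know $u\le Cr+C$. That follows from the gradient/mean-value estimates for the Poisson solution under nonnegative Ricci curvature, or more simply from the exhaustion bound on $\psi$ combined with comparison. Then $\int_X\Ric^n_\omega\le A^n\int_X(\ddc\varphi)^n<\infty$.

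\textbf{Main obstacle.} The hardest step is the Bézout estimate on the noncompact $X$ with only Lipschitz (non-smooth, non-maximal) $\varphi$. One must justify that mixed masses of $\max(u,A\varphi+B)$ over sublevel sets match those of $A\varphi$, which needs $\varphi$ to be a genuine exhaustion so the sets $\{\varphi<t\}$ are relatively compact. One must also control the boundary contributions when taking limits. If the linear lower bound on $\varphi$ is not available from the construction, I would first try to modify $\varphi$ to $\max(\varphi,\epsilon\psi-C)$, using the finiteness of the mass of $(\ddc\psi)^n$-type terms near infinity coming from the gradient bound.
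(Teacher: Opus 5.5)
There is a genuine gap. Your B\'ezout step is the standard comparison of Monge--Amp\`ere masses: for locally bounded psh $u$ with $u\le A\varphi+B$ and $\varphi$ a continuous psh exhaustion, $\int_X(\ddc u)^n\le A^n\int_X(\ddc\varphi)^n$. That part is fine. The problem is that neither input you feed into it is available.

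First, Step~2 needs a global $u$ with $\ddc u=\Ric_\omega$ and $u\le Cr+C$. The Ni--Shi--Tam solution of the Poincar\'e--Lelong equation with such growth requires an integrated decay hypothesis on the averaged scalar curvature, roughly $\int_0^\infty\bigl(|B(o,t)|^{-1}\int_{B(o,t)}S\bigr)\,dt<\infty$. You have not verified this, and it is not known to follow from the hypotheses of Theorem~\ref{thm:main}, where no volume growth is assumed. No mean-value or comparison argument can bound a solution whose existence has not been established. The global potential you actually have is $\log|s|^2_\omega$ for $0\neq s\in H^0(X,K_X)$. But $\ddc\log|s|^2_\omega=\Ric_\omega+[Z(s)]$, so when $Z(s)\neq\emptyset$ its Monge--Amp\`ere measure is not $\Ric_\omega^n$, and for $n\ge2$ it is not even defined in the Bedford--Taylor sense.

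Second, the normalisation $\varphi\ge ar-b$ in Step~1 is not part of Proposition~\ref{prop:weight}, and it cannot be arranged as you propose. Both $\varphi+\delta\psi$ and $\max(\varphi,\epsilon\psi-C)$ have Monge--Amp\`ere mass at least a positive multiple of $\int_X(\ddc\psi)^n$: the first by positivity of the mixed terms, the second by your own comparison lemma. A gradient bound gives no control of that mass; for instance $\sqrt{1+|z|^2}$ on flat $\mathbb{C}^n$ has bounded gradient and infinite mass. In fact your comparison lemma shows that a finite-mass exhaustion of linear growth would force every psh function of linear growth to have finite mass. So finiteness of $\int_X\alpha^n$ pushes $\varphi$ towards slow growth, not linear growth.

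The missing idea is to replace pointwise growth of a Ricci potential by $L^2$ control. The paper takes $s$ with $\int_X|s|^2_\omega\e^{-qu_t}\omega^n\le C$ (Proposition~\ref{prop:section}). It then works with the locally bounded regularised potential $\log(|s|^2_\omega+\vep^2\e^{q\varphi})$, that is, with $\zeta_\vep(s)=\ddbar V_\vep+q\alpha$ where $V_\vep=\log(1+\Vert s\Vert^2_{q\varphi}/\vep^2)$. This current satisfies $\zeta_\vep(s)\ge\frac{f_\vep}{1+f_\vep}\Ric_\omega$, so Fatou gives $\int_X\Ric_\omega^n\le\liminf_{\vep\to0}\int_X\zeta_\vep(s)^n$, and the divisor drops out.

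The B\'ezout bound $\int_X\zeta_\vep(s)^n\le q^n\int_X\alpha^n$ is then proved with the cutoffs $\chi_a$ (with $|\ddbar\chi_a|\le A/a$) rather than sublevel sets of $\varphi$. So no exhaustion or growth property of $\varphi$ is ever used. The only information on $V_\vep$ is $0\le V_\vep\le f_\vep\in L^1$. The resulting boundary terms are controlled uniformly in $a$ by the induction on the mixed masses $\int_X\chi_a^{4n-2l}f_\vep\zeta_\vep(s)^k\alpha^m\omega^l$ in Lemma~\ref{lem:mixedterms}. That induction uses Cauchy--Schwarz, the Bochner identity for $s$, $|\nabla u_t|\le A$, and $H(r)=r-\log(1+r)$.

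Even your comparison route applied to $\log(|s|^2_\omega+\vep^2\e^{q\varphi})$ would only give, via the mean-value inequality, an upper bound $q\varphi+O(\log r)$. It would therefore still need $\varphi$ to be an exhaustion dominating a multiple of $\log r$, which you do not have.
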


By a theorem of Chen--Zhu \cite{CZ2}, a complete non-compact K\"ahler manifold with positive and bounded sectional curvature and finite top Ricci mass is biholomorphic to a quasiprojective variety. Thus Theorem \ref{thm:main} immediately gives the following.

\begin{theorem}\label{thm:ChenZhucorollary}
Let $(X^n,\omega)$  be a complete non-compact K\"ahler manifold with positive and bounded sectional curvature. Then $X$ is biholomorphic to a quasiprojective variety.
\end{theorem}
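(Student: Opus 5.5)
The statement immediately above, Theorem \ref{thm:ChenZhucorollary}, is a direct combination of Theorem \ref{thm:main} with the compactification theorem of Chen--Zhu \cite{CZ2}. So the proof reduces to checking that the hypotheses of Chen--Zhu are met.

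Let $(X^n,\omega)$ be complete, non-compact, with positive and bounded sectional curvature. First, positive sectional curvature places us in the first case of Theorem \ref{thm:main}. That theorem gives $\int_X \Ric_\omega^n<\infty$, i.e.\ finite top Ricci mass.

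Second, I will record that positive sectional curvature implies positive holomorphic bisectional curvature, since bisectional curvature is a sum of two sectional curvatures. Chen--Zhu's theorem is phrased for complete non-compact K\"ahler manifolds with bounded and positive (bisectional or sectional) curvature together with finite $\int_X\Ric^n$. All of its hypotheses are therefore satisfied. Their conclusion is that $X$ is biholomorphic to a quasiprojective variety, and this is exactly the claim.

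The only potential obstacle is matching conventions exactly. One must confirm that the finiteness condition in \cite{CZ2} is stated as $\int_X\Ric_\omega^n<\infty$ and not as some normalized or averaged variant. One must also confirm that their curvature hypothesis is implied by bounded positive sectional curvature. I expect both checks to be immediate, so no further argument is needed beyond citing Theorem \ref{thm:main}. All of the real difficulty lives in the proof of Theorem \ref{thm:main}, via the B\'ezout-type estimates and the Lipschitz weight $\varphi$ satisfying \eqref{ineq:finiteMAmassweight}.
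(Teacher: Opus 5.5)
Your proposal is correct and matches the paper's argument: Theorem \ref{thm:main} gives $\int_X \Ric_\omega^n<\infty$, and the Chen--Zhu theorem \cite{CZ2} then yields quasiprojectivity. The paper quotes Chen--Zhu directly for complete non-compact K\"ahler manifolds with positive bounded sectional curvature and finite top Ricci mass, so your remark about bisectional curvature is correct but not needed.
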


In fact, in \cite{DPS2} we established Theorem \ref{thm:ChenZhucorollary} without the boundedness assumption on sectional curvature. Note that in this dimension, it follows by a result of Ramanujam \cite{R} that $X$ is actually biholomorphic to $\mathbb{C}^2$.  In work in progress, we shall use the techniques in this paper to remove the boundedness assumption in Theorem \ref{thm:ChenZhucorollary}. As remarked in \cite{DPS2}, a Ramanujam-type result for higher $n$ under curvature hypotheses is a challenging open problem.

\indent The proof of Theorem \ref{thm:main} takes inspiration (just as in \cite{DPS}) from \cite{CZ3}. Just as in \cite{DPS}, the weight $\varphi$ satisfying \eqref{ineq:finiteMAmassweight} is related to the Ricci form using a Poincar\'e-Lelong type formula. Such a relationship is delicate because of the non-smoothness of the weight and we use Bedford-Taylor theory in an essential way. The main innovation arises in the B\'ezout-type estimates for Poincar\'e-Lelong integrals. In \cite{DPS}, a delicate integration-by-parts argument involving a Bochner identity was used. We bypass that argument in this paper whilst noting that it may still be useful as an independent argument. Instead, as we explain in Section \ref{sec:proof}, we introduce a linear growth function $H$, and an induction argument to prove the desired estimates. This argument was produced after discussion with ChatGPT 5.5 Pro. To our knowledge, this is perhaps the first instance of AI coming up with a truly original idea in geometric analysis. After probing further, we realised that it may have been motivated by the ODE-type methods used in Ohsawa-Takegoshi-type theorems. That being said, our experiments (by asking it to prove the results of \cite{DPS} without using literature after 2024) showed that it is still unable to come up with the road-map of the proof of \cite{DPS} on its own.\\

\emph{Acknowledgements}: The second-named author Pingali is grateful to ANRF for support via a MATRICS grant (ANRF/ARGM/2025/000313/MTR). The authors also acknowledge partial support by the DST FIST program-2021
[TPN-700661]. We also thank ChatGPT 5.5 Pro for its indispensable contributions to this paper.

\section{Preliminaries and background}\label{sec:prelim}

In this section we summarise some important results used in the proof of Theorem \ref{thm:main}.

\indent In \cite{DPS} we proved the following important result by solving a Monge-Amp\`ere equation.

\begin{proposition}[Datar--Pingali--Seshadri]\label{prop:weight}
Let $(X^n,\omega)$ be either a complete non-compact K\"ahler manifold with positive sectional curvature, or a complete non-compact K\"ahler manifold with $\BK_\omega>0$ admitting a smooth strictly plurisubharmonic exhaustion function with uniformly bounded gradient. Then there exists a uniformly Lipschitz strictly plurisubharmonic function $\varphi$ on $X$ such that, with
\[
        \alpha:=\ddc\varphi,
\]
one has
\[
        0< A_0:=\int_X \alpha^n<\infty .
\]
\end{proposition}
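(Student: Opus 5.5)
We plan to construct $\varphi$ as the solution of a complex Monge--Amp\`ere equation whose right-hand side has finite mass, the strategy already used in \cite{DPS}. First we fix a smooth strictly plurisubharmonic exhaustion function $\rho$ with $|\nabla\rho|\le C$. In the positive sectional curvature case this comes from Greene--Wu \cite{GW}: $X$ is Stein and admits such a $\rho$ built from a smoothed Busemann-type function. We may assume $\rho$ grows like the distance $r$.

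Next we pick a smooth positive volume form $\mu$ with $\int_X\mu<\infty$. A natural choice is $\mu=e^{-F}\omega^n$, where $F$ grows fast enough in $\rho$ (for example $F=K\rho$ with $K$ large, using the volume comparison $\mathrm{Vol}(B(o,r))\le Cr^{2n}$). We then solve $(\ddc\varphi_j)^n=\mu$ on an exhaustion by strongly pseudoconvex sublevel domains $\Omega_j=\{\rho<j\}$, with boundary data $\varphi_j=\psi$ for a fixed Lipschitz comparison psh function $\psi$ such as $\psi=\rho$. This is the Caffarelli--Kohn--Nirenberg--Spruck Dirichlet problem.

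The crux is uniform estimates independent of $j$. For $C^0$ bounds relative to $\rho$, we note that $\rho$ has $(\ddc\rho)^n\ge c\,\omega^n\ge\mu$ wherever $e^{-F}\le c$, so a suitable multiple of $\rho$ (plus constants) acts as a subsolution. The maximum solution $\sup\{u\ \text{psh}: (\ddc u)^n\ge\mu,\ u\le\rho\}$ then gives two-sided bounds $\rho-C\le\varphi_j\le\rho+C$ locally. For the gradient, we would apply the Błocki/Guan-type gradient estimate $|\nabla\varphi_j|\le C$. Its maximum principle computation uses a lower bound on bisectional curvature, which holds here since $\BK_\omega>0$, together with the boundary gradient bound coming from the barrier $\rho$, whose gradient is bounded. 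I expect this uniform Lipschitz estimate, global and independent of $j$, to be the main obstacle, because $X$ is non-compact and $\mu$ decays: the estimate must not depend on a lower bound for $\mu$, and the $\log\mu$ terms enter the computation through $\nabla F$, which is bounded since $|\nabla\rho|\le C$.

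Finally, we pass to a limit $\varphi_j\to\varphi$ locally uniformly using Arzel\`a--Ascoli. Bedford--Taylor continuity of the Monge--Amp\`ere operator gives $(\ddc\varphi)^n=\mu$, so $0<A_0=\int\mu<\infty$. Strict plurisubharmonicity is obtained by replacing $\varphi$ with $\varphi+\varepsilon\rho'$, where $\rho'$ is a bounded Lipschitz strictly psh function (e.g.\ $\rho' = \log(1+\rho)$-type, convexified). We must check that the mixed terms $(\ddc\varphi)^{k}\wedge(\ddc\rho')^{n-k}$ still have finite mass. This follows by a Chern--Levine--Nirenberg argument, provided $\ddc\rho'$ decays like $\rho^{-1}\ddc\rho$.
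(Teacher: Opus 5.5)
Your $C^0$ step has a genuine gap, and it is structural rather than technical. Your subsolution $a\rho+b$ (or $\rho$ itself) has the same boundary values as $\varphi_j$. The comparison principle therefore gives $\varphi_j\ge a\rho+b$ on $\Omega_j$, and any limit satisfies $\varphi\ge a\rho+b$ on all of $X$. For psh functions, being larger at infinity forces at least as much Monge--Amp\`ere mass. To see this, fix a compact $K$ and set $w=\max\bigl(a\rho+b,(1+\varepsilon)\varphi-A\bigr)$, with $A$ so large that $w=a\rho+b$ near $K$. Since $\varphi\ge a\rho+b$, we have $w=(1+\varepsilon)\varphi-A$ near $\{\rho=R\}$ once $\varepsilon(a\rho+b)>A$ there. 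Stokes then gives
\[
(1+\varepsilon)^n\int_{\{\rho<R\}}(\ddc\varphi)^n=\int_{\{\rho<R\}}(\ddc w)^n\ge a^n\int_K(\ddc\rho)^n .
\]
Hence $\int_X\mu\ge a^n\int_X(\ddc\rho)^n$. So your scheme can only close when $\rho$ already has finite Monge--Amp\`ere mass, and in that case $\varphi=\rho$ proves the proposition with nothing to solve. In the cases the proposition is meant for, this fails. Already for $n=1$ on the paraboloid $z=x^2+y^2$, a global conformal coordinate $w$ satisfies $r\approx\frac14(\log|w|)^2$. Any subharmonic function $\ge c\,r-C$ then has circle averages growing faster than $\log|w|$, and therefore infinite Laplacian mass. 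By contrast, the finite-mass Lipschitz weight $\log(1+|w|^2)$ grows only like $\sqrt r$.

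Consequently the bound $\varphi_j\le\rho+C$, uniform in $j$, is false. Also, the envelope $\sup\{u:(\ddc u)^n\ge\mu,\ u\le\rho\}$ you invoke is just $\rho$, whose Monge--Amp\`ere measure is $(\ddc\rho)^n$, not $\mu$. The missing idea is that the weight must grow strictly more slowly than any exhaustion of linear growth, so it cannot be pinned to $\rho$ at the boundary. You need a different normalization (for instance $\varphi_j-\varphi_j(o)$), together with local Lipschitz bounds that do not rest on a two-sided comparison with $\rho$. The paper itself only cites \cite{DPS} for this proposition, noting that it is proved by solving a Monge--Amp\`ere equation. Your overall strategy points in that direction, but as set up it cannot produce a finite-mass weight. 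A smaller issue: your final perturbation $\varphi+\varepsilon\rho'$ is circular. The pure term $(\ddc\rho')^n$ must itself have finite mass, so $\rho'$ would already be the desired weight; moreover $\log(1+\rho)$ is neither bounded nor, in general, psh.
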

\vspace{5mm}

This weight was used in \cite{DPS,DPS2} to produce weighted-$L^2$ holomorphic sections of $K_X$. The non-smoothness of this weight is problematic and was addressed in \cite{DPS, DPS2} by heat-flow regularization which we summarize below.

\indent Under either curvature hypothesis in Theorem \ref{thm:main}, $\Ric_{\omega} \geq 0$; hence there exists a unique, positive, symmetric and stochastically complete heat kernel $H(x,y,t)$. If $\varphi$ is a Lipschitz function on $X$, we let $$u(x,t) = \int_X H(x,y,t)\varphi(y)\,dy.$$

 Then $u(x,t)$ is a solution to the heat equation with initial condition $u(x,0)=\varphi(x)$. The heat-flow regularization used in \cite{DPS,NT,NT2} gives that $u_t(\cdot):=u(\cdot,t)$ is strictly psh for each $t>0$, and there exists a constant $A>0$ such that
 \begin{align*}
 |\nabla u_t|,~t|\ddc u_t| \leq A.
  \end{align*}

Note however that the smooth weights $u_t$ may no longer have finite Monge-Amp\`ere mass.  To switch back and forth between the smooth and non-smooth weights, we need the following crucial estimate \cite{DPS} (which was incidentally proven by ChatGPT 5.0 in the thinking mode, and was inspired from the techniques in \cite{Grig}).

\begin{lemma}\label{lem:heat-est}
There exists a dimensional constant $c(n)$ such that for any $0\leq t_1\leq t_2$, $$u_{t_2}(x) \leq u_{t_1}(x) + Ac\sqrt{t_2-t_1}$$ for all $x \in X$, where $A$ is the Lipschitz constant for $\varphi$.
\end{lemma}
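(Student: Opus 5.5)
We prove Lemma \ref{lem:heat-est} via the semigroup property of the heat kernel, which reduces it to the case $t_1=0$. Since $H$ is stochastically complete, $u_{t_2}=P_{t_2-t_1}u_{t_1}$ with $P_s f(x)=\int_X H(x,y,s)f(y)\,dy$. Each $u_{t_1}$ is again $A$-Lipschitz, because $\Ric_\omega\ge0$ and the Bakry--Émery gradient estimate $|\nabla P_t f|\le P_t|\nabla f|$ give $|\nabla u_{t_1}|\le A$. So it suffices to show that for every $A$-Lipschitz $f$ and every $s>0$,
\[
P_s f(x)-f(x)\le c(n)A\sqrt{s}.
\]
The claimed inequality then follows with $f=u_{t_1}$ and $s=t_2-t_1$. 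When $t_1=t_2$ there is nothing to prove.

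For fixed $x$, stochastic completeness gives $\int_X H(x,y,s)\,dy=1$. Hence
\[
|P_sf(x)-f(x)|\le \int_X H(x,y,s)|f(y)-f(x)|\,dy\le A\int_X H(x,y,s)\,d(x,y)\,dy.
\]
This reduces everything to bounding the first moment $m(x,s)=\int_X H(x,y,s)\,d(x,y)\,dy$ by $c(n)\sqrt s$. We do this with the Li--Yau Gaussian upper bound, valid since $\Ric_\omega\ge0$:
\[
H(x,y,s)\le \frac{C(n)}{\mathrm{Vol}\,B(x,\sqrt s)}\,e^{-d(x,y)^2/5s}.
\]

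We then split $X$ into the ball $B(x,\sqrt s)$ and the annuli $B(x,2^{k+1}\sqrt s)\setminus B(x,2^k\sqrt s)$ for $k\ge0$. On the ball, $d\le\sqrt s$ and the kernel has mass at most $1$, so the contribution is at most $\sqrt s$. On the $k$-th annulus, Bishop--Gromov gives $\mathrm{Vol}\,B(x,2^{k+1}\sqrt s)\le 2^{(k+1)2n}\,\mathrm{Vol}\,B(x,\sqrt s)$, since the real dimension is $2n$. The contribution is therefore at most
\[
C(n)\,2^{k+1}\sqrt s\; 2^{2n(k+1)}\,e^{-4^k/5}.
\]
Summing over $k$ gives a convergent series bounded by $c(n)\sqrt s$. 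The constants depend only on $n$, so the estimate is uniform in $x$; in particular no volume-growth assumption is needed.

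The main obstacle is justifying the analytic inputs on a non-compact manifold. These are stochastic completeness (which gives exact mass $1$ and hence the semigroup identity for $u_t$), the commutation $|\nabla P_tf|\le P_t|\nabla f|$ for merely Lipschitz $f$, and the integrability that makes $P_s f$ finite for a function of linear growth. Gaussian decay together with polynomial volume growth handles the last point, and Yau's stochastic completeness under $\Ric\ge0$ handles the first.

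If one wants to avoid the gradient estimate, there is an alternative: apply the moment bound directly to $\varphi$ at times $t_1$ and $t_2$. This is weaker, because it only compares each $u_t$ with $\varphi$ and not with each other. So I would first try the semigroup route above, where each ingredient is a standard consequence of nonnegative Ricci curvature.
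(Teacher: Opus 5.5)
Your proof is correct, and it in fact gives the two-sided bound $|u_{t_2}-u_{t_1}|\le c(n)A\sqrt{t_2-t_1}$. This paper contains no proof of the lemma to compare against: it only quotes the result from \cite{DPS}, remarking that the argument follows Grigor'yan-style heat-kernel techniques. Your argument is a self-contained proof in that spirit. You reduce to $t_1=0$ via $u_{t_2}=P_{t_2-t_1}u_{t_1}$ together with preservation of the Lipschitz constant under $\Ric_\omega\ge0$, and then bound the first moment $\int_X H(x,y,s)\,d(x,y)\,dy\le c(n)\sqrt s$ using Li--Yau and Bishop--Gromov.

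A few small points to tighten.
\begin{itemize}
\item The semigroup identity does not come from stochastic completeness. It comes from Chapman--Kolmogorov for the minimal heat kernel, plus Fubini, which is justified by the Gaussian bound and polynomial volume growth. Stochastic completeness is instead what gives $\int_X H(x,y,s)\,dy=1$, and that is what lets you subtract $u_{t_1}(x)$.
\item For the Lipschitz step with an unbounded $\varphi$, apply the Bakry--\'Emery estimate to the bounded truncations $\max(-N,\min(\varphi,N))$. Then pass to the limit $N\to\infty$, again using the Gaussian bound.
\end{itemize}

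If you want lighter inputs, two variants are available.
\begin{itemize}
\item The first-moment bound does not need Li--Yau. Laplacian comparison gives $\Delta_y d(x,y)^2\le 4n$ distributionally, so $\partial_s\int_X H(x,y,s)\,d(x,y)^2\,dy\le 4n$. Cauchy--Schwarz then yields the explicit constant $c(n)=2\sqrt n$.
\item You can bypass the gradient estimate and the semigroup reduction. Bound $|\Delta u_s(x)|=\bigl|\int_X \partial_sH(x,y,s)(\varphi(y)-\varphi(x))\,dy\bigr|\le CA/\sqrt s$ using Gaussian bounds on $\partial_s H$. Then integrate $\partial_s u=\Delta u$ over $[t_1,t_2]$. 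The price is that this route needs time-derivative heat-kernel estimates in place of Bakry--\'Emery.
\end{itemize}
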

\vspace{5mm}

To compare the Ricci form with $\partial \bar{\partial} \varphi$, we need $L^2$-holomorphic sections. To this end we need the following standard result \cite{DPS}.

\begin{proposition}[$L^2$ canonical section]\label{prop:section}
Let $u_t$, $0\le t\le1$, be the heat-flow smoothing of $\varphi$, with $u_0=\varphi$. There exist $q\gg1$, a non-zero holomorphic section
\[
        s\in H^0(X,K_X),
\]
and a constant $C$ such that
\[
        \int_X \|s\|^2_{q u_t}\,\omega^n\le C,
        \qquad 0\le t\le1.
\]
Here
\[
        \|s\|^2_{q u}:=|s|_\omega^2\e^{-q u}.
\]
\end{proposition}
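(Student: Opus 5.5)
We need a plan for Proposition \ref{prop:section}: there is a nonzero holomorphic $s\in H^0(X,K_X)$ with $\int_X |s|^2_\omega e^{-qu_t}\omega^n\le C$ uniformly for $t\in[0,1]$.

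The plan is to build $s$ by Hörmander's $L^2$ method with the weight $q u_1$, and then transfer the bound to all $t\in[0,1]$ using Lemma \ref{lem:heat-est}.

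\textbf{Step 1: the weight.} Since $u_1$ is smooth and strictly psh, and $\Ric_\omega\ge0$, the twisted curvature of $K_X\otimes(\text{trivial},e^{-qu_1})$ dominates $q\ddc u_1>0$. Hörmander's theorem then applies on the complete Kähler manifold $X$. It needs $X$ to be weakly pseudoconvex, which holds because it admits a psh exhaustion: in the sectional curvature case by Greene--Wu, and in the bisectional case by hypothesis.

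\textbf{Step 2: the cut-off.} Fix a point $o$ and a holomorphic chart around it. Let $\chi$ be a cut-off supported near $o$ and $\sigma=dz_1\wedge\cdots\wedge dz_n$ there. Set $v=\bar\partial(\chi\sigma)$, a $K_X$-valued $(n,1)$-form with compact support away from $o$. Use the modified weight $\psi=qu_1+2n\chi\log|z-z(o)|$. For $q$ large, the positive lower bound $q\ddc u_1\ge q\,c_K\,\omega$ on the compact support of $\chi$ absorbs the negative part of $\ddc(\chi\log|z|)$, so $\psi$ stays psh with $\ddc\psi\ge\vep\,\omega$ on $\mathrm{supp}\,\bar\partial\chi$.

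Hörmander then gives $f$ with $\bar\partial f=v$ and
\[
\int_X|f|^2e^{-\psi}\le\frac1\vep\int|v|^2e^{-\psi}<\infty .
\]
Since $\int |f|^2|z|^{-2n}<\infty$ near $o$, we get $f(o)=0$. Hence $s=\chi\sigma-f$ is holomorphic with $s(o)=\sigma(o)\neq0$, and
\[
\int_X|s|^2e^{-qu_1}\omega^n<\infty .
\]

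\textbf{Step 3: uniformity in $t$.} By Lemma \ref{lem:heat-est} with $t_1=t$, $t_2=1$, we have $u_1\le u_t+Ac\sqrt{1-t}\le u_t+Ac$ for every $t\in[0,1]$, including $t=0$ where $u_0=\varphi$. Therefore $e^{-qu_t}\le e^{qAc}e^{-qu_1}$, and
\[
\int_X\|s\|^2_{qu_t}\omega^n\le e^{qAc}\int_X\|s\|^2_{qu_1}\omega^n=:C .
\]

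\textbf{Main obstacle.} The delicate step is Step 2: choosing $q$ so that $\psi$ remains psh despite the cut-off singularity. Strict positivity of $u_1$ is only qualitative; the bound $t|\ddc u_t|\le A$ gives upper bounds, not lower ones. But $u_1$ is smooth and strictly psh, so on the fixed compact set $\mathrm{supp}\,\chi$ it has a positive lower bound $c_K$, and taking $q\gg1$ suffices.

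Choosing weight $u_1$ rather than $\varphi$ also avoids any need for Bedford--Taylor theory. The comparison runs in the right direction because Lemma \ref{lem:heat-est} bounds the later time $u_1$ above by the earlier times $u_t$.
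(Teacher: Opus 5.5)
Your proof is correct and is the standard argument that the paper defers to \cite{DPS}. You apply H\"ormander's $L^2$ estimate with the smooth strictly psh weight $qu_1$ plus a logarithmic pole at $o$, taking $q$ large to absorb the cut-off error on a compact set, which gives $s\in H^0(X,K_X)$ with $s(o)\neq0$ and $\int_X\|s\|^2_{qu_1}\omega^n<\infty$; Lemma \ref{lem:heat-est}, in the form $u_1\le u_t+Ac$, then transfers this bound uniformly to all $t\in[0,1]$, including $t=0$ (one cosmetic slip: $\Ric_\omega\ge0$ plays no role in Step 1, since for $(n,1)$-forms the Bochner--Kodaira--Nakano curvature term is exactly $\ddc\psi$).
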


Lastly, we will use the following construction (cf. \cite[Lemma 8]{DPS}) of cutoff functions to integrate-by-parts.

\begin{lemma}\label{lem:cut-off}
Assume $(X,\omega)$ satisfies either curvature hypothesis in Theorem \ref{thm:main}. Fix $o\in X$. Then there exist $0< \theta < 1$, $A>0$ and $a_0>0$ such that the following holds: for all $a>a_0$ there exist a smooth function $\chi_a:X\rightarrow [0,1]$ having the following properties:
\begin{enumerate}
\item $\chi_a \equiv 1$ on $B(o,\theta a)$ and $\mathrm{Supp}(\chi_a)\subset B(o,\theta^{-1}a).$
\item There exists a constant $A$ such that $$|\nabla \chi_a|, |\ddc \chi_a| \leq \frac{A}{a}. $$
\end{enumerate}
\end{lemma}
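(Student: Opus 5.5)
The plan is to build $\chi_a$ by composing a fixed one-variable cutoff with a heat-flow smoothing of the distance function $r(x)=d(o,x)$, evaluated at parabolic time comparable to $a^2$. This is the same smoothing procedure already used for $\varphi$ above. Since $r$ is $1$-Lipschitz, set
\[
 v_t(x)=\int_X H(x,y,t)\,r(y)\,dy .
\]
Because $\Ric_\omega\ge0$ under either curvature hypothesis of Theorem \ref{thm:main}, the heat kernel is stochastically complete and has Li--Yau Gaussian bounds. Volume doubling also holds. This gives three facts:
\begin{itemize}[label={}]
\item $|\nabla v_t|\le 1$, by the Bochner formula with $\Ric\ge0$.
\item $|v_t-r|\le c(n)\sqrt t$, by applying the argument of Lemma \ref{lem:heat-est} in both directions, which uses only that $r$ is Lipschitz.
\item $t\,|\ddc v_t|\le A$.
\end{itemize}
For the Hessian bound (third item) I would invoke the Ni--Tam estimate for solutions of the heat equation with Lipschitz initial data on K\"ahler manifolds with nonnegative bisectional curvature. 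This is the same input that gives $t|\ddc u_t|\le A$ for $\varphi$.

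Next choose the scale. Take $t=(\delta a)^2$ with $\delta$ small enough that $c(n)\delta\le 1/4$, so that $|v_t-r|\le a/4$. Fix a smooth non-increasing $\eta:\mathbb R\to[0,1]$ with $\eta\equiv1$ on $(-\infty,1/2]$ and $\eta\equiv0$ on $[1,\infty)$, and set
\[
 \chi_a(x)=\eta\bigl(v_{(\delta a)^2}(x)/a\bigr).
\]
The support property then follows from the two-sided comparison. If $r\le a/4$, then $v\le a/2$, so $\chi_a=1$. If $\chi_a\neq0$, then $v<a$, so $r<5a/4$. Hence $\theta=1/4$ works, since $5a/4< 4a=\theta^{-1}a$.

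For the derivative bounds, differentiate the composition:
\[
 |\nabla\chi_a|\le \frac{\sup|\eta'|}{a},\qquad
 |\ddc\chi_a|\le \frac{\sup|\eta''|\,|\nabla v|^2}{a^2}+\frac{\sup|\eta'|\,|\ddc v|}{a}\le \frac{C}{a^2}+\frac{C}{\delta^2a^3}.
\]
For $a>a_0:=1$ both bounds are at most $A/a$ after enlarging $A$. This is better than required. Smoothness of $\chi_a$ is automatic, since $v_t$ is smooth for $t>0$.

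The main obstacle I expect is the Hessian estimate $t|\ddc v_t|\le A$ for the non-plurisubharmonic datum $r$. The Ni--Tam/Li--Yau--Hamilton type argument is most naturally stated for plurisubharmonic or bounded-gradient data under $\BK\ge0$, and I would check that only the Lipschitz bound on the initial datum enters.

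If that step turns out to be delicate, I have a fallback for the positive sectional curvature case. There I would replace $r$ by the smooth strictly convex exhaustion of Greene--Wu, which is comparable to $r$ and has bounded gradient, and then smooth it by the same heat flow. In the second case of the hypothesis I would use the given strictly plurisubharmonic exhaustion $\rho$ with bounded gradient instead of $r$. There one needs $\rho$ comparable to $r$, which should follow from properness together with the gradient bound and the growth of plurisubharmonic functions. The remaining steps are unchanged in both fallbacks.
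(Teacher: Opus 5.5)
The overall structure is sound: you compose a fixed profile with a heat-flow smoothing of an exhaustion at parabolic scale $a$, get the support property from $|v_t-r|\le c\sqrt t$, and read off the derivatives by the chain rule. The first two bullet points are standard under $\Ric_\omega\ge0$. \textbf{The gap is the step you flagged, and it is more than a missing check.}

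As a statement uniform in $t$, the bound $t|\ddc v_t|\le A$ is false even on flat $\mathbb{C}^n$ with $r=|z|$. There $v_t$ is radial with $v_t(0)=c_n\sqrt t$, so $\ddc v_t(0)$ is a multiple of $\omega$. Its trace is a fixed multiple of $\Delta v_t(0)=\partial_t v_t(0)=c_n/(2\sqrt t)$, which gives $t|\ddc v_t(0)|\sim\sqrt t\to\infty$. In the paper this bound is only invoked for $0<t\le 1$, and only for the plurisubharmonic weight $\varphi$.

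The scale-correct version $\sqrt t\,|\ddc v_t|\le A$ would still give your conclusion, but nothing you cite supplies it for the non-psh datum $r$. For psh data the two-sided bound comes from $\ddc u_t\ge0$, which the heat flow preserves when $\BK\ge0$ (Ni--Tam), together with the trace bound $\Delta u_t=\partial_t u_t=O(t^{-1/2})$. For $r$ there is no one-sided bound, so the trace does not control the individual eigenvalues of $\ddc v_t$. Your main route therefore has no proof of its key step.

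\textbf{Positive sectional curvature case.} Your fallback is the right repair here. The paper gives no argument for this lemma, citing \cite[Lemma 8]{DPS}, and its introduction attributes the cut-off input in this case to Greene--Wu, which matches your fallback. Take a convex (hence psh) Lipschitz exhaustion $b$, such as the Cheeger--Gromoll Busemann function or its Greene--Wu smoothing. Its heat flow satisfies $0\le\ddc b_t\le Ct^{-1/2}\omega$. Even at the fixed time $t=1$ this gives $|\ddc\chi_a|\le A/a$, which is exactly the stated bound. You still owe the lower bound $b\ge c\,r-C$. It follows from Toponogov's hinge comparison: for $r(x)$ large, a minimal segment from $o$ to $x$ makes a small angle with some ray from $o$, because limits of segments of diverging length are rays.

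\textbf{Bisectional case.} Your claim that a strictly psh exhaustion with bounded gradient must be comparable to $r$ is false. The gradient bound only gives $\rho\le Cr+C$. On $\mathbb{C}^n$, the function $\log(1+|z|^2)$ is strictly psh, proper, has bounded gradient, and grows like $\log r$. For it, $\eta(\rho_t/a)$ would be supported on a ball of radius about $e^{a/2}$ rather than $\theta^{-1}a$.

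This case needs a genuinely new input. One possibility is to keep $v_t$ and get a lower bound from a matrix Li--Yau--Hamilton inequality of Cao--Ni type: $\ddc v_t\ge-\frac{v_t}{t}\omega$ for positive solutions when $\BK\ge0$. You would have to verify its hypotheses in this noncompact setting. At $t=(\delta a)^2$ on $\{v_t\le a\}$ it gives $\ddc v_t\ge-\frac{C}{a}\omega$. Combined with $\Delta v_t=O(1/a)$, this bounds $|\ddc v_t|$ by $C/a$ exactly where $\eta'(v_t/a)\neq0$, which is all the chain rule needs.
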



\section{Proof of Theorem \ref{thm:main}} \label{sec:proof}

If $n=1$, the assertion is Cohn--Vossen's theorem, since the Ricci form is a constant multiple of the Gaussian curvature form.  We therefore assume $n\geq2$.
By Proposition \ref{prop:weight}, there exists a uniformly Lipschitz strictly plurisubharmonic function $\varphi$ with
\[
       0< \int_X\alpha^n<\infty,
        \qquad \alpha=\ddbar\varphi .
\]
Choose $q$ and $s\in H^0(X,K_X)\setminus\{0\}$ as in Proposition \ref{prop:section}.

For $\vep>0$ set
\[
        f_\vep:=\frac{\Vert s\Vert_{q\varphi}^2}{\vep^2},\qquad
        V_\vep:=\log(1+f_\vep),\qquad
        \zeta_\vep(s):=\ddbar V_\vep+q\alpha .
\]
For $0<t\leq1$ we also set
\[
        f_{\vep,t}:=\frac{\Vert s\Vert_{q u_t}^2}{\vep^2},\qquad
        V_{\vep,t}:=\log(1+f_{\vep,t}),\qquad
        \alpha_t:=\ddbar u_t,
\]
\[
        \zeta_{\vep,t}(s):=\ddbar V_{\vep,t}+q\alpha_t .
\]
On compact subsets, $u_t\to\varphi$, $f_{\vep,t}\to f_\vep$, and
$V_{\vep,t}+qu_t\to V_\vep+q\varphi$ locally uniformly.  Hence, for every compactly supported smooth function $\eta$, Bedford--Taylor continuity gives
\begin{equation}\label{eq:BT-convention}
\int_X \eta\, f_{\vep,t}\zeta_{\vep,t}(s)^k\alpha_t^m\omega^l
\longrightarrow
\int_X \eta\, f_\vep\zeta_\vep(s)^k\alpha^m\omega^l
\end{equation}
whenever $k+m+l=n$; the same convergence holds without the factor $f_{\vep,t}$.  We use this regularisation below for integrations by parts involving $V_\vep$.

\begin{lemma}\label{lem:PL}
$\zeta_\vep(s)$ is a closed, positive $(1,1)$ current satisfying
\begin{equation}\label{ineq:PLinequality}
\zeta_\vep(s)\geq
        \frac{\Vert s\Vert_{q\varphi}^2}{\Vert s\Vert_{q\varphi}^2+\vep^2}\,\Ric_\omega .
\end{equation}
Moreover,
\begin{equation}\label{ineq:RicciPL}
        \int_X\Ric_\omega^n
        \leq
        \liminf_{\vep\to0^+}\int_X\zeta_\vep(s)^n,
\end{equation}
where the wedge product on the right is interpreted in the Bedford--Taylor sense.
\end{lemma}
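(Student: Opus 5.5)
We will first compute $\zeta_\vep(s)$ locally and pointwise. On a coordinate chart write $s=h\,dz_1\wedge\cdots\wedge dz_n$ with $h$ holomorphic. Then
\[
\|s\|^2_{q\varphi}=|h|^2\det(g)^{-1}\e^{-q\varphi},
\qquad\text{so}\qquad
\log f_\vep=\log|h|^2-\log\det g-q\varphi-2\log\vep .
\]
Away from the zero set $Z(s)$ this gives
\[
\ddbar\log f_\vep=\Ric_\omega-q\alpha .
\]
The first step is a smooth computation. We replace $\varphi$ by $u_t$ and set $\psi_t=\log f_{\vep,t}$, so that $f_{\vep,t}=\e^{\psi_t}$. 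Then
\[
\ddbar V_{\vep,t}=\frac{f_{\vep,t}}{1+f_{\vep,t}}\,\ddbar\psi_t+\frac{f_{\vep,t}}{(1+f_{\vep,t})^2}\,\sqrt{-1}\d\psi_t\wedge\bar\d\psi_t .
\]
This holds away from $Z(s)$. Near $Z(s)$ the function $V_{\vep,t}$ is smooth, since $f_{\vep,t}$ is smooth, so the identity extends by continuity with the gradient term $\ge0$. Substituting $\ddbar\psi_t=\Ric_\omega-q\alpha_t$ gives
\[
\zeta_{\vep,t}(s)\ \ge\ \frac{f_{\vep,t}}{1+f_{\vep,t}}\Ric_\omega+\Bigl(1-\frac{f_{\vep,t}}{1+f_{\vep,t}}\Bigr)q\alpha_t\ \ge\ \frac{f_{\vep,t}}{1+f_{\vep,t}}\Ric_\omega .
\]
Here we use $\alpha_t>0$ and $\Ric_\omega\ge0$. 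The ratio $f/(1+f)$ equals $\|s\|^2/(\|s\|^2+\vep^2)$. In particular $\zeta_{\vep,t}(s)$ is a smooth, closed, positive $(1,1)$-form.

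Next we pass to $t\to0$. The potential $V_{\vep,t}+qu_t$ is a local potential of $\zeta_{\vep,t}(s)$ up to a fixed smooth term. Indeed, locally it equals $\log(\e^{-qu_t}\cdot\text{smooth}+\vep^2\e^{0})+\dots$; more simply, $\zeta_{\vep,t}=\ddbar(V_{\vep,t}+qu_t)$. This potential converges locally uniformly to $V_\vep+q\varphi$, as noted before \eqref{eq:BT-convention}. Positivity and closedness are preserved under weak limits, so $\zeta_\vep(s)$ is a closed positive current. The coefficient $f_{\vep,t}/(1+f_{\vep,t})$ converges locally uniformly to $f_\vep/(1+f_\vep)$, and $\Ric_\omega$ is a fixed smooth form. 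Hence the inequality $\zeta_{\vep,t}-\frac{f_{\vep,t}}{1+f_{\vep,t}}\Ric_\omega\ge0$ passes to the weak limit and yields \eqref{ineq:PLinequality}. As a consequence, $\zeta_\vep(s)$ has a locally bounded, in fact continuous, potential $V_\vep+q\varphi-(\text{smooth})$, so its Bedford--Taylor powers are defined.

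For \eqref{ineq:RicciPL}, write $\rho_\vep:=f_\vep/(1+f_\vep)$, a continuous function with values in $[0,1]$. We aim to show the Bedford--Taylor inequality
\[
\zeta_\vep(s)^n\ \ge\ \rho_\vep^n\,\Ric_\omega^n
\]
as measures. At the smooth level we expand $\zeta_{\vep,t}^n=(\rho_{\vep,t}\Ric_\omega+\beta_t)^n$ with $\beta_t\ge0$ smooth. Every mixed term in this expansion is a wedge of positive $(1,1)$-forms and is therefore $\ge0$. This gives $\zeta_{\vep,t}^n\ge\rho_{\vep,t}^n\Ric_\omega^n$ pointwise. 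We then let $t\to0$: for a compactly supported $\eta\ge0$, use \eqref{eq:BT-convention} with $k=n$ on the left, and the uniform convergence $\rho_{\vep,t}\to\rho_\vep$ on the right. This yields $\int\eta\,\zeta_\vep^n\ge\int\eta\,\rho_\vep^n\Ric_\omega^n$.

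The final step is to let $\vep\to0$. Since $s\not\equiv0$, the zero set $Z(s)$ is a proper analytic subset and hence has measure zero. Off $Z(s)$ we have $\rho_\vep\uparrow1$ as $\vep\downarrow0$. Monotone convergence, together with taking the supremum over $\eta\uparrow1$, gives
\[
\int_X\Ric_\omega^n=\lim_{\vep\to0}\int_X\rho_\vep^n\Ric_\omega^n\le\liminf_{\vep\to0}\int_X\zeta_\vep(s)^n .
\]
The main point needing care is the passage $t\to0$ in the measure inequality. Specifically, one must check that the smooth potentials converge locally uniformly so that Bedford--Taylor continuity applies. This is exactly the convergence supplied before \eqref{eq:BT-convention}, so I expect no real obstruction.
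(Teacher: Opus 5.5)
Your proposal is correct and follows essentially the same route as the paper. Both arguments do the smooth computation of $\zeta_{\vep,t}(s)$ for $t>0$ (with the gradient term extended across $Z(s)$), pass to $t\to0$ via locally uniform convergence of the potentials $V_{\vep,t}+qu_t$ and Bedford--Taylor continuity, use the pointwise inequality $\zeta_{\vep,t}(s)^n\ge \bigl(\tfrac{f_{\vep,t}}{1+f_{\vep,t}}\bigr)^n\Ric_\omega^n$, and finally let $\vep\to0$ using that $Z(s)$ is a null set. The only cosmetic difference is the order of limits at the end: you take the supremum over cutoffs first and then apply monotone convergence in $\vep$, whereas the paper applies Fatou's lemma for the fixed cutoff $\eta=\chi_a^{4n}$ and lets $a\to\infty$ afterwards.
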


\begin{proof}
For $t>0$, the forms are smooth.  Away from the zero set of $s$,
\[
        \ddbar\log f_{\vep,t}=\Ric_\omega-q\alpha_t,
\]
so
\[
\zeta_{\vep,t}(s)
 =\frac{f_{\vep,t}}{1+f_{\vep,t}}\Ric_\omega
 +\frac{q}{1+f_{\vep,t}}\alpha_t
 +\frac{\sqrt{-1}\,\partial f_{\vep,t}\wedge\bar\partial f_{\vep,t}}
        {f_{\vep,t}(1+f_{\vep,t})^2}
 \geq
 \frac{f_{\vep,t}}{1+f_{\vep,t}}\Ric_\omega .
\]
The quotient term extends smoothly across $Z(s)$: locally $f_{\vep,t}=\rho |g|^2$ with $\rho>0$ smooth and $g$ holomorphic.  Passing $t\to0$ gives \eqref{ineq:PLinequality}.

Let $\eta\ge0$ be compactly supported and smooth.  By pointwise algebraic monotonicity for smooth semipositive forms,
\[
\int_X\eta\,\zeta_{\vep,t}(s)^n
\geq
\int_X\eta\left(\frac{f_{\vep,t}}{1+f_{\vep,t}}\right)^n\Ric_\omega^n .
\]
Letting $t\to0$ and using \eqref{eq:BT-convention} gives
\[
\int_X\eta\,\zeta_\vep(s)^n
\geq
\int_X\eta\left(\frac{f_\vep}{1+f_\vep}\right)^n\Ric_\omega^n .
\]
As $\vep\downarrow0$, the coefficient on the right increases to $1$ off the divisor $Z(s)$, which has zero Riemannian measure.  Fatou's lemma therefore gives
\[
\int_X\eta\,\Ric_\omega^n
\leq
\liminf_{\vep\to0^+}\int_X\eta\,\zeta_\vep(s)^n
\leq
\liminf_{\vep\to0^+}\int_X\zeta_\vep(s)^n .
\]
Taking $\eta=\chi_a^{4n}$ and letting $a\to\infty$ proves \eqref{ineq:RicciPL}.
\end{proof}

We next estimate the right-hand side of \eqref{ineq:RicciPL}.  By Fatou's lemma and the cutoffs,
\begin{align}
\int_X\zeta_\vep(s)^n
&\leq \liminf_{a\to\infty}\int_X\chi_a^{4n}\zeta_\vep(s)^n                                      \notag\\
&\leq \limsup_{a\to\infty}\int_X
        \left(V_\vep\ddbar(\chi_a^{4n})+q\alpha\chi_a^{4n}\right)
        \zeta_\vep(s)^{n-1}                                                   \notag\\
&\leq \limsup_{a\to\infty}\int_X
        \left(\frac{C}{a}V_\vep\omega\chi_a^{4n-2}+q\alpha\chi_a^{4n}\right)
        \zeta_\vep(s)^{n-1}.\label{eq:firstexpansionofzeta}
\end{align}
Here the integration by parts is first done for the smooth $t>0$ regularizations and then $t\to0$ is taken; also
$|\ddbar(\chi_a^N)|\leq Ca^{-1}\chi_a^{N-2}\omega$ for the exponents used below.

One can now hope that inductively the only surviving term is the Monge-Amp\`ere mass of $\alpha$ and that every other term goes to $0$ for fixed $\vep$. The problem with such an expectation is seen right away for the term $$\displaystyle \int_X \chi_a^{4n-2} V_{\vep} \omega \zeta_\vep(s)^{n-1}=\int_X V_{\vep} \chi_a^{4n-2} (\ddbar V_{\vep}+q\alpha)  \omega \zeta_\vep(s)^{n-2}.$$
Pretending $V_{\vep}$ is smooth, if we integrate-by-parts twice, it is not helpful because we end up with two derivatives of $V_{\vep}$. If we transfer only one derivative, we end up with a term involving $V_{\vep} \bar{\partial} V_{\vep}$. In \cite{DPS}, using the Cauchy-Schwarz inequality and a delicate $W^{1,2}$-type estimate on $s$, we could bound such a term. However, when $n>2$, another iteration of this argument will run into difficulties involving higher-order derivatives of $s$. Here, motivated by Ohsawa-Takegoshi type arguments, ChatGPT 5.5 Pro produced a function $H$ satisfying good properties that allow an induction argument. Indeed, define
\begin{equation}\label{def:Hep}
        H(r):=r-\log(1+r),\qquad r\ge0 .
\end{equation}
For every smooth function $h\ge0$,
\begin{equation}\label{ineq:propsofHep}
        0\leq H(h)\leq h,
        \qquad
        h\,\ddbar\log(1+h)\leq\ddbar H(h),
\end{equation}
indeed the difference in the second inequality is
$\sqrt{-1}\,\partial h\wedge\bar\partial h/(1+h)$.

\begin{lemma}\label{lem:mixedterms}
Fix $\vep>0$.  If $1\leq l\leq n$, $0\leq k,m\leq n-1$, and $k+l+m=n$, then there is a constant $C_{k,l,m}(\vep)$, independent of $a$, such that
\begin{equation}\label{ineq:mixedtermsineqone}
\int_X\chi_a^{4n-2l}f_\vep\zeta_\vep(s)^k\alpha^m\omega^l
\leq C_{k,l,m}(\vep).
\end{equation}
Moreover, for $0\leq p\leq n-1$ there is a constant $D_p(\vep)$, independent of $a$, such that
\begin{equation}\label{ineq:mixedtermsineqtwo}
\int_X\chi_a^{4n}\zeta_\vep(s)^p\alpha^{n-p}
\leq \frac{D_p(\vep)}{a}+q^p\int_X\alpha^n .
\end{equation}
\end{lemma}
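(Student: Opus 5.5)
The plan is to prove \eqref{ineq:mixedtermsineqone} by induction on $k$ and then deduce \eqref{ineq:mixedtermsineqtwo} by induction on $p$, using \eqref{ineq:mixedtermsineqone} to absorb the error terms. Every integration by parts will be carried out for the smooth regularisations $f_{\vep,t},\zeta_{\vep,t}(s),\alpha_t$, after which we pass $t\to0$ using \eqref{eq:BT-convention}. We will always use $|\ddbar(\chi_a^N)|\le Ca^{-1}\chi_a^{N-2}\omega$; this is why the exponent $4n-2l$ drops by $2$ each time a power of $\omega$ is gained. The constant $C_{k,l,m}(\vep)$ is allowed to depend on $\vep$, since $f_\vep=\vep^{-2}\|s\|^2_{q\varphi}$ and Proposition \ref{prop:section} (at $t=0$) gives $\int_X f_\vep\,\omega^n\le C\vep^{-2}$.

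\emph{Inductive step for \eqref{ineq:mixedtermsineqone}.} Let $k\ge1$. Split one factor $f_\vep\zeta_\vep(s)=f_\vep\ddbar V_\vep+qf_\vep\alpha$ and use \eqref{ineq:propsofHep} with $h=f_\vep$, so that $f_\vep\ddbar V_\vep\le\ddbar H(f_\vep)$. This gives
\[
\int_X\chi_a^{4n-2l}f_\vep\zeta_\vep^k\alpha^m\omega^l\le\int_X\chi_a^{4n-2l}\ddbar H(f_\vep)\wedge\zeta_\vep^{k-1}\alpha^m\omega^l+q\int_X\chi_a^{4n-2l}f_\vep\zeta_\vep^{k-1}\alpha^{m+1}\omega^l .
\]
In the first term we move both derivatives onto the cutoff. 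Using $0\le H(f_\vep)\le f_\vep$, it is bounded by $\frac Ca\int_X\chi_a^{4n-2(l+1)}f_\vep\zeta_\vep^{k-1}\alpha^m\omega^{l+1}$, which has index $(k-1,l+1,m)$. The second term has index $(k-1,l,m+1)$, and $m+1\le n-1$ holds because $l\ge1$. Both indices are admissible with smaller $k$, so the induction closes. This is the point of $H$: it converts $f_\vep\ddbar V_\vep$ into an exact form with a nonnegative, linearly controlled potential, so no $V_\vep\bar\partial V_\vep$ terms appear.

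\emph{Base case $k=0$, the main obstacle.} We must bound $\int_X\chi_a^{4n-2l}f_\vep\alpha^m\omega^l$ with $l\ge1$, where $\alpha$ is only a Bedford--Taylor current. I would write $\varphi=u_1+\psi$. Since $u_t$ increases in $t$ (heat flow of a psh function), Lemma \ref{lem:heat-est} gives $-Ac\le\psi\le0$, and $\alpha_1\le A\omega$. Expanding $\alpha^m=(\alpha_1+\ddbar\psi)^m$, the $\alpha_1$ factors are dominated by $\omega$, and the $\omega^n$ term is controlled by Proposition \ref{prop:section}. For each $\ddbar\psi$ factor we integrate by parts onto $\chi_a f_\vep$, using boundedness of $\psi$. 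The cutoff derivatives cost $a^{-1}$. For $\ddbar f_\vep$ we use $\ddbar f_\vep\ge f_\vep(\Ric_\omega-q\alpha)\ge-qf_\vep\alpha$ together with the sign $\psi\le0$. Cross terms $\partial\chi_a\wedge\bar\partial f_\vep$ are handled by Cauchy--Schwarz and $\sqrt{-1}\partial f_\vep\wedge\bar\partial f_\vep/f_\vep\le\ddbar f_\vep+qf_\vep\alpha$.

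The danger is circularity: the term $f_\vep\alpha\wedge\cdots$ reappears with the same total $\alpha$-degree. I would first try to break it by an inner induction on $m$, weighting with $\e^{\lambda\psi}$ (or comparing $f_\vep$ with $f_{\vep,1}$, which differ by the bounded factor $\e^{-q\psi}$) so that the reappearing term carries a coefficient smaller than $1$ and can be absorbed. If that fails, the fallback is to work throughout with the weight $u_1$ in $f$, paying only bounded multiplicative constants.

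\emph{Proof of \eqref{ineq:mixedtermsineqtwo}.} Induct on $p$; the case $p=0$ is trivial since $\chi_a\le1$. For $p\ge1$ write $\zeta_\vep=\ddbar V_\vep+q\alpha$ in one factor, which gives
\[
\int_X\chi_a^{4n}\zeta_\vep^p\alpha^{n-p}=\int_X V_\vep\,\ddbar(\chi_a^{4n})\wedge\zeta_\vep^{p-1}\alpha^{n-p}+q\int_X\chi_a^{4n}\zeta_\vep^{p-1}\alpha^{n-p+1}.
\]
Using $0\le V_\vep\le f_\vep$, the first term is at most $\frac Ca\int_X\chi_a^{4n-2}f_\vep\zeta_\vep^{p-1}\alpha^{n-p}\omega\le\frac Ca C_{p-1,1,n-p}(\vep)$ by \eqref{ineq:mixedtermsineqone}; the index is admissible since $n-p\le n-1$. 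Applying the induction hypothesis to the second term yields $D_p=CC_{p-1,1,n-p}+qD_{p-1}$ and the main term $q^p\int_X\alpha^n$.
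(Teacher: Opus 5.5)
Your proof of \eqref{ineq:mixedtermsineqtwo} and your $k\ge1$ step (the $H$-trick, landing on $(k-1,l+1,m)$ and $(k-1,l,m+1)$) are the same as the paper's. The real difference is how the $\alpha$-degree is raised.

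The paper inducts on $k+m$. For the step $(k,m,l)\to(k,m+1,l-1)$ it integrates $\alpha_t=\ddbar u_t$ by parts only once, so $\bar\partial u_t$ appears and is controlled by the uniform Lipschitz bound $|\nabla u_t|\le A$. Cauchy--Schwarz and the Bochner identity then return $J_{a,t}$ only under a square root, and $J_{a,t}\le C+C\sqrt{J_{a,t}}$ closes. This treats mixed $\zeta$/$\alpha$ terms in one sweep with the original weight.

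You instead strip all $\zeta$ factors first. You then handle the pure terms with the $L^\infty$ bound $-Ac\le\psi=\varphi-u_1\le0$ and the Hessian bound $\alpha_1\le A\omega$, moving all of $\ddbar\psi$ onto $\chi_a^Nf$. This is a valid alternative: it needs no gradient bound on $u_t$ and no absorption. But it only works in the form of your fallback, and you should commit to it. With $f_\vep$ itself, the term $\psi\chi_a^N\ddbar f_\vep$ returns $qAc\int_X\chi_a^Nf_\vep\alpha^m\omega^l$. That is a linear recurrence whose coefficient has no reason to be below $1$, so neither absorption nor an $\e^{\lambda\psi}$ weight is the right mechanism.

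\textbf{The fix.} Since $f_\vep=\e^{-q\psi}f_{\vep,1}\le\e^{qAc}f_{\vep,1}$, it suffices to bound $B(m,l):=\int_X\chi_a^Nf_{\vep,1}\alpha^m\omega^l$, with $N=4n-2l$, by induction on $m$. For the smooth weight you have $\ddbar f_{\vep,1}\ge-qf_{\vep,1}\alpha_1\ge-qAf_{\vep,1}\omega$ and $\sqrt{-1}\,\partial f_{\vep,1}\wedge\bar\partial f_{\vep,1}/f_{\vep,1}\le\ddbar f_{\vep,1}+qAf_{\vep,1}\omega$. Hence each of the following is at most $C\,B(m-1,l+1)$:
\begin{enumerate}
\item the $\alpha_1$-part;
\item the term $\psi\chi_a^N\ddbar f_{\vep,1}$, using $\psi\le0$;
\item the term $\psi f_{\vep,1}\ddbar(\chi_a^N)$;
\item the Cauchy--Schwarz cross terms.
\end{enumerate}
The singular current $\alpha$ never comes back at the same degree, so the induction closes from Proposition \ref{prop:section}. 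These integrations by parts are legitimate directly in Bedford--Taylor theory: $\psi$ is bounded and continuous, $\chi_a^Nf_{\vep,1}$ is smooth with compact support, and $\alpha^{m-1}\wedge\omega^l$ is closed and positive. Alternatively, run the same argument at level $t$ with $\psi_t=u_t-u_1$.
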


Assuming Lemma \ref{lem:mixedterms}, \eqref{eq:firstexpansionofzeta} gives
\begin{align}
\int_X\zeta_\vep(s)^n
&\leq
\limsup_{a\to\infty}\left\{
\frac{C}{a}\int_X\chi_a^{4n-2}f_\vep\omega\zeta_\vep(s)^{n-1}
+q\int_X\chi_a^{4n}\alpha\zeta_\vep(s)^{n-1}\right\}       \notag\\
&\leq q^n\int_X\alpha^n .\label{ineq:zeta-uniform}
\end{align}
The first term vanishes by \eqref{ineq:mixedtermsineqone} with $(k,m,l)=(n-1,0,1)$, and the second term uses \eqref{ineq:mixedtermsineqtwo} with $p=n-1$.  Combining \eqref{ineq:zeta-uniform} with Lemma \ref{lem:PL} proves Theorem \ref{thm:main}.

\begin{proof}[Proof of Lemma \ref{lem:mixedterms}]
Assume first that \eqref{ineq:mixedtermsineqone} has been proved.  We prove \eqref{ineq:mixedtermsineqtwo} by induction on $p$.  The case $p=0$ is immediate, with $D_0(\vep)=0$.  Suppose the estimate is known at level $p$.  Using regularisation and $V_\vep\leq f_\vep$,
\begin{align*}
\int_X\chi_a^{4n}\zeta_\vep(s)^{p+1}\alpha^{n-p-1}
&=\int_X\chi_a^{4n}\zeta_\vep(s)^p(\ddbar V_\vep+q\alpha)\alpha^{n-p-1}\\
&\leq \frac{C}{a}\int_X\chi_a^{4n-2}f_\vep\zeta_\vep(s)^p\alpha^{n-p-1}\omega
+q\int_X\chi_a^{4n}\zeta_\vep(s)^p\alpha^{n-p}\\
&\leq \frac{D_{p+1}(\vep)}{a}+q^{p+1}\int_X\alpha^n,
\end{align*}
where the first term is absorbed into $D_{p+1}(\vep)/a$ by \eqref{ineq:mixedtermsineqone}.  This proves \eqref{ineq:mixedtermsineqtwo}.

It remains to prove \eqref{ineq:mixedtermsineqone}.  We prove the following regularized estimate, uniform in the smoothing parameter: for $0<t\le1$ and $k+m+l=n$,
\begin{equation}\label{ineq:mixedtermsineqone-reg}
\int_X\chi_a^{4n-2l}f_{\vep,t}\zeta_{\vep,t}(s)^k\alpha_t^m\omega^l
\leq C_{k,l,m}(\vep),
\end{equation}
with $C_{k,l,m}(\vep)$ independent of $a$ and $t$.  Then \eqref{ineq:mixedtermsineqone} follows from \eqref{eq:BT-convention}.

We induct on $j=k+m=n-l$.  When $j=0$,
\[
\int_X\chi_a^{2n}f_{\vep,t}\omega^n
\leq \frac1{\vep^2}\int_X\Vert s\Vert_{q u_t}^2\omega^n
\leq \frac{C}{\vep^2}
\]
by Proposition \ref{prop:section}, uniformly in $a$ and $t$.
Assume \eqref{ineq:mixedtermsineqone-reg} is known for all triples with $k+m\leq j$.  Let $(k,m,l)$ satisfy $k+m=j$ and $k+m+l=n$.  If $l=1$ there is no level $j+1$ estimate to prove, so assume $l\ge2$.

First consider the triple $(k,m+1,l-1)$.  Put
\[
        N:=4n-2l+2,
        \qquad
        T_t:=\zeta_{\vep,t}(s)^k\alpha_t^m\omega^{l-1},
\]
and
\[
        J_{a,t}:=\int_X\chi_a^N f_{\vep,t}\alpha_t\wedge T_t .
\]
Since $\alpha_t=\ddbar u_t$ and $T_t$ is closed, integration by parts and absolute values give
\begin{align}
J_{a,t}
&\leq \frac{C}{a}\int_X\chi_a^{N-2}f_{\vep,t}\zeta_{\vep,t}(s)^k\alpha_t^m\omega^l
+
\left|\int_X\chi_a^N\sqrt{-1}\,\partial f_{\vep,t}\wedge\bar\partial u_t\wedge T_t\right| .
\label{ineq:regularized-J-first}
\end{align}
The first term is bounded by the induction hypothesis.  For the second term, Cauchy--Schwarz gives
\begin{align}
&\left|\int_X\chi_a^N\sqrt{-1}\,\partial f_{\vep,t}\wedge\bar\partial u_t\wedge T_t\right|        \notag\\
&\leq
\left(\int_X\chi_a^N
\frac{\sqrt{-1}\,\partial f_{\vep,t}\wedge\bar\partial f_{\vep,t}}{f_{\vep,t}}
\wedge T_t\right)^{1/2}
\left(\int_X\chi_a^N f_{\vep,t}\sqrt{-1}\,\partial u_t\wedge\bar\partial u_t\wedge T_t\right)^{1/2}.
\label{ineq:regularized-CS}
\end{align}
The quotient is interpreted by its smooth extension across $Z(s)$; locally $f_{\vep,t}=\rho |g|^2$ with $\rho>0$ smooth and $g$ holomorphic.  Since $|\nabla u_t|\leq C$,
$\sqrt{-1}\,\partial u_t\wedge\bar\partial u_t\leq C\omega$, and since $0\leq\chi_a\leq1$,
$\chi_a^N\leq\chi_a^{N-2}=\chi_a^{4n-2l}$.  Hence the second factor in \eqref{ineq:regularized-CS} is bounded by the induction hypothesis, and
\begin{equation}\label{ineq:regularized-CS-reduced}
\left|\int_X\chi_a^N\sqrt{-1}\,\partial f_{\vep,t}\wedge\bar\partial u_t\wedge T_t\right|
\leq
C\left(\int_X\chi_a^N
\frac{\sqrt{-1}\,\partial f_{\vep,t}\wedge\bar\partial f_{\vep,t}}{f_{\vep,t}}
\wedge T_t\right)^{1/2}.
\end{equation}
The Bochner identity for the canonical section $s$ is
\[
\ddbar f_{\vep,t}
=f_{\vep,t}\Ric_\omega-qf_{\vep,t}\alpha_t
+\frac{\sqrt{-1}\,\partial f_{\vep,t}\wedge\bar\partial f_{\vep,t}}{f_{\vep,t}}.
\]
Since $\Ric_\omega\geq0$,
\begin{align}
\int_X\chi_a^N
\frac{\sqrt{-1}\,\partial f_{\vep,t}\wedge\bar\partial f_{\vep,t}}{f_{\vep,t}}\wedge T_t
&\leq
\int_X\chi_a^N\ddbar f_{\vep,t}\wedge T_t+qJ_{a,t}                                      \notag\\
&=\int_X f_{\vep,t}\ddbar(\chi_a^N)\wedge T_t+qJ_{a,t}                                  \notag\\
&\leq C+qJ_{a,t},\label{ineq:regularized-bochner-estimate}
\end{align}
where the last line uses the cutoff estimate and the induction hypothesis.  Combining \eqref{ineq:regularized-J-first}, \eqref{ineq:regularized-CS-reduced}, and \eqref{ineq:regularized-bochner-estimate},
\[
        J_{a,t}\leq C+C(C+qJ_{a,t})^{1/2}\leq C+C\sqrt{J_{a,t}},
\]
so $J_{a,t}\leq C$, uniformly in $a$ and $t$.

Now consider the triple $(k+1,m,l-1)$.  Using \eqref{ineq:propsofHep} at the smooth level,
\begin{align}
&\int_X\chi_a^N f_{\vep,t}\zeta_{\vep,t}(s)^{k+1}\alpha_t^m\omega^{l-1}      \notag\\
&\leq
\int_X\chi_a^N\zeta_{\vep,t}(s)^k\ddbar H(f_{\vep,t})\alpha_t^m\omega^{l-1}
+q\int_X\chi_a^N f_{\vep,t}\zeta_{\vep,t}(s)^k\alpha_t^{m+1}\omega^{l-1}      \notag\\
&\leq\left|\int_X H(f_{\vep,t})\ddbar(\chi_a^N)\wedge\zeta_{\vep,t}(s)^k\alpha_t^m\omega^{l-1}\right|
+q\int_X\chi_a^N f_{\vep,t}\zeta_{\vep,t}(s)^k\alpha_t^{m+1}\omega^{l-1}      \notag\\
&\leq
\frac{C}{a}\int_X\chi_a^{N-2}f_{\vep,t}\zeta_{\vep,t}(s)^k\alpha_t^m\omega^l
+q\int_X\chi_a^N f_{\vep,t}\zeta_{\vep,t}(s)^k\alpha_t^{m+1}\omega^{l-1}
\leq C.\label{ineq:regularized-kplusone}
\end{align}
The last line uses the induction hypothesis for the first term and the already proved $(k,m+1,l-1)$ estimate for the second.  This completes the induction, proves \eqref{ineq:mixedtermsineqone-reg}, and hence proves \eqref{ineq:mixedtermsineqone}.
\end{proof}

\end{document}